\documentclass[letterpaper, 10 pt, conference]{ieeeconf}  

\IEEEoverridecommandlockouts

\usepackage{cite}
\usepackage{amsmath,amssymb,amsfonts}
\usepackage{algorithmic}
\usepackage{graphicx}
\usepackage{textcomp}
\usepackage{xcolor}
\usepackage{comment}
\usepackage[ruled]{algorithm2e}

\usepackage{amsthm}

\usepackage{orcidlink}
\usepackage[normalem]{ulem}

\newtheorem{theorem}{Theorem}
\newtheorem*{problem}{Problem}
\newtheorem{remark}{Remark}
\newtheorem*{remark*}{Remark}
\newtheorem{lemma}{Lemma}
\newtheorem*{lemma*}{Lemma}
\newtheorem{assumption}{Assumption}

\DeclareMathOperator*{\argmin}{arg\,min}

\DeclareMathOperator*{\tr}{tr}
\DeclareMathOperator*{\interior}{int}

\newcommand{\des}{\textrm{des}}

\title{\LARGE \bf
Adaptive Incentive Design with Regret Minimization
}

\author{
Georgios Vasileiou\orcidlink{0009-0002-3679-0510}$^{\dagger}$,
Lantian Zhang\orcidlink{0000-0002-1814-5596}$^{\dagger}$,
and
Silun Zhang\orcidlink{0000-0003-3772-761X}$^{\dagger}$%
\thanks{This work has been partially supported by the Wallenberg AI, Autonomous Systems and Software Program (WASP), funded by the Knut and Alice Wallenberg Foundation.}%
\thanks{$^{\dagger}$ Georgios Vasileiou, Lantian Zhang and Silun Zhang are with the Dept. of Mathematics at the KTH Royal Institute of Technology, Stockholm, 10044, Sweden. {\tt\small\{geovas, lantian, silunz\}@kth.se}}%
}

\begin{document}

\maketitle
\thispagestyle{empty}
\pagestyle{empty}
\bstctlcite{IEEE:BSTurloff} 
\begin{abstract}
Incentive design constitutes a foundational paradigm for influencing the behavior of strategic agents, wherein a system planner (principal) publicly commits to an incentive mechanism designed to align individual objectives with collective social welfare. This paper introduces the Regret-Minimizing Adaptive Incentive Design (RAID) problem, which aims to synthesize incentive laws
under information asymmetry and achieve asymptotically minimal regret compared to an oracle with full information.
To this end, we develop the RAID algorithm, which employs a switching policy alternating between probing (exploration) and estimate-based incentivization (exploitation). The associated type estimator
relies only on a weaker excitation condition required for strong consistency in least squares estimation, substantially relaxing the persistence-of-excitation assumptions previously used in adaptive incentive design.
In addition, we establish the strong consistency of the proposed type estimator and prove that the incentive obtained asymptotically minimizes the planner's average regret almost surely. Numerical experiments illustrate the  convergence rate of the proposed methodology.
\end{abstract}

\emph{Keywords}—Incentive schemes, adaptive systems, agents and autonomous systems, game theoretical methods.

\section{Introduction}

Incentivization is a central topic in many economic and engineered systems, where a central planner seeks to coordinate the behavior of agents with unknown or misaligned objectives.
Designing incentives that align individual interests with collective goals gives rise to the general class of \textit{Incentive Design} problems, also known as principal–agent or reverse Stackelberg games~\cite{grootReverseStackelbergGames2012}. 
These problems describe sequential decision-makers coupled through interdependent costs, where a system planner (leader) aims to steer
a population of self-interested agents (followers)
toward a globally favorable outcome.
Each agent, however, acts to minimize its private cost that depends on its own action and the principal's posted incentive. 
The planner must
design and publicly commit to an incentive law which maps agents' actions to rewards or penalties, thereby shaping the agents' strategic behavior.
From a control-theoretic perspective~\cite{hoControltheoreticViewIncentives1980, basarAffineIncentiveSchemes1984, ratliffPerspectiveIncentiveDesign2019}, such a commitment can be viewed as a feedback mechanism that prescribes a desired equilibrium solution among agents.
Problems of this type often emerge in adaptive pricing of distributed energy resources in smart grids~\cite{liDistributedOnlinePricing,liSociallyOptimalEnergy2024, samadiDemandSideMagement2012}, congestion-aware road tolling~\cite{povedaDistributedAdaptivePricing, barreraDynamicIncentives, grootSystemOptimalRoutingTraffic2015}, and data-crowdsourcing in federated learning markets~\cite{hoAdaptiveContractDesignCrowdsourcing, dingContractDesignFederated2021}. In such applications, the planner must adapt incentives under uncertainty about agents' preferences, making adaptive methods essential.

A significant challenge in incentive design problems is  adverse selection~\cite{picardDesignIncentiveSchemes1987}, i.e., information asymmetry arising from the planner’s incomplete knowledge of the agents’ private cost functions or preferences. Historically, adverse selection has been addressed via the design of mechanisms that induce truthful participation~\cite{nisanAlgorithmicGameTheory2007, hartlineMechanismDesignApproximation2013}, methods which are restricted to static, one-shot interactions and are strongly model-dependent. 
To overcome these limitations,  recent \textit{adaptive incentive design}~\cite{ratliffAdaptiveIncentiveDesign2021} approaches infer agents' preferences through repeated principal-agent interactions and the information derived from the corresponding equilibrium outcomes. 
Relaxing the exactness of agent responses, Chen et al.~\cite{chenActiveInverseMethods2025} study Stackelberg settings where agents exhibit bounded rationality, and the leader actively designs informative actions for a maximum-likelihood estimator.
Departing from estimation-based methods, Maheshwari et al.~\cite{maheshwariAdaptiveIncentiveDesign2024} study incentive design under a two-timescale framework, where agents are learning individuals that exponentially converge to a Nash equilibrium. The planner leverages agent externalities, the difference between social objective and the agent cost gradients, to design incentives on a slower timescale. For finite action spaces, \cite{yorulmazSoftInducementFramework2025} examines equilibrium steering in Bayesian normal-form games with bimatrix payoffs, where the principal guides learning agents toward desired action profiles by imposing constant, lower-bounded incentives, which achieves sublinear regret in utility. 

In this work, we introduce the Regret-Minimizing Adaptive Incentive Design (RAID) problem, which integrates online type estimation together with incentive design to achieve asymptotically vanishing behavioral regret relative to an oracle. The paper makes two main contributions.

First, we investigate 
the dual problem of type estimation and incentive commitment, examining how online parameter estimation affects the planner’s ability to regulate agent behavior.
Motivated by literature on adaptive estimation and control~\cite{tzelaiExtendedLeastSquares1986}, we formulate the planner's regret of a given sequence of incentives as the cumulative tracking error up to iteration $t$. We propose an adaptive incentive policy
(Algorithm~\ref{alg:one}) that switches between probing (exploratory) and estimate-based (exploitative) incentives, and prove that the resulting incentive policy achieves almost-surely $O(t^\gamma \log t)$ regret, for $\gamma \in [ \tfrac{2}{3}, 1)$. The sublinear regret implies the policy is asymptotically optimal and balances the competing demands of learning and control in adaptive incentive design.

Second, we relax the excitation required for the planner's identification objective to a weak diminishing-excitation condition, known to guarantee the strong consistency of the least-squares estimator in stochastic approximation~\cite{laiLeastSquaresEstimates1982}. This result eliminates the standard persistence-of-excitation (PE) assumption made in adaptive incentive design, and provides strong consistency guarantees for the estimator with injection of diminishing excitation. 

Overall, our findings extend the adaptive-control perspective on adaptive incentive design by ensuring strongly consistent type estimation and providing almost-surely vanishing regret for an appropriate switching incentive policy.

\noindent\textbf{Notations.} Given some $n \in \mathbb{N}$, let $[n] = \{1, 2, \ldots, n\}$. 
For a vector-valued map $f:\mathbb{R} \rightarrow \mathbb{R}^n$,
denote derivatives $\nabla f = \big(\frac{\partial f_1}{\partial x}(x), \ldots, \frac{\partial f_n}{\partial x}(x)\big)^\top$
and $\nabla^2 f = \nabla (\nabla f)$. 
The notation $f(t) = O(g(t))$ denotes the existence of $c> 0$ and $t_0$ so that $|f(t)|\leq c|g(t)|$ for all $t\geq t_0$.
Likewise, $f(t) = \Omega(g(t))$ indicates $|f(t)|\geq c|g(t)|$.
We write $f(t) = \Theta(g(t))$ when both 
bounds hold, and $f(t) = o(g(t))$ if $ f(t) /g(t) \to 0$.
$C^k$ denotes the class of $k-$smooth functions.
 
\section{Problem Formulation}
\label{sec:problem_formulation}
Consider a noncooperative game between $n$ \textit{agents} and a single \emph{system planner}. Each agent $i\in [n]$ aims to minimize an individual cost function by choosing a strategy  $x_i \in \mathcal X_i$, where $\mathcal{X}_i\subset \mathbb R$ is a compact interval.\footnote{ Agents' strategies are scalar for clarity of exposition. Results extend directly to higher-dimensional strategies with notational changes only.}
Agent $i$ seeks  to minimize the individual cost
\begin{equation}
    \label{eq:costs}
    c_i(x_i,p_i) = \ell_i(x_i) + p_ix_i,
\end{equation}
where $\ell_i : \mathbb R \to \mathbb R$ is the agent's \emph{nominal cost}, and $p_i\in \mathbb R$ is an external incentive rate imposed by the system planner.
Here, the agent's nominal cost corresponds to an inherent preference over their strategy, and $p_ix_i$ is the planner's intervention, imposed to modify the nominal behavior. 
Linear incentives, as the simplest incentive mappings available to the planner, have been extensively examined in recent work on incentive design~\cite{maheshwariAdaptiveIncentiveDesign2024, liSociallyOptimalEnergy2024} and are sufficient to influence agent behavior by modifying marginal costs. 

The system planner's objective is to minimize a social cost $\Psi: \mathbb R^n \rightarrow \mathbb R$ that depends on the collective action $x = (x_1, \ldots, x_n)^\top$. When agents act in self-interest, 
aligning their behavior with the planner's objective requires that the planner design and implement an appropriate incentive. Define the set of socially optimal outcomes by
\[\mathcal{X}_\des = \argmin_{x\in \mathcal{X}} \Psi(x), \]
where  $\mathcal{X}=\prod_{i\in [n]} \mathcal{X}_i$.
If $\Psi(\cdot)$ is strongly convex then $\mathcal{X}_\des$ is a singleton; otherwise, the planner may select any $x_\des \in \mathcal{X}_\des$ as the desired agent profile. We restrict our attention to problems where $x_\des$ it interior to the feasible region.
\begin{assumption}
    \label{ass:feasibility}
    There exists a profile $x_\des \in \mathcal{X}_\des$ such that $x_\des \in \prod_{i\in [n]} \interior \mathcal{X}_i$.
\end{assumption}

A fundamental challenge for the planner is the \textit{information asymmetry} that arises when the agent's nominal costs $\ell_i$ are unknown. 
This information asymmetry prevents the direct computation of optimal incentives and necessitates the use of type identification methods. The planner parameterizes agents' nominal costs with 
\begin{equation}
\label{eq:parametrization}
\ell_i(x_i) = \theta_i^{*\top} \Phi(x_i),
\end{equation}
where $\Phi:\mathbb R \rightarrow \mathbb R^d$ is the monomial basis\footnote{For notational simplicity, we assume that all agent's nominal costs are parametrized with the same $d$-dimensional basis $\Phi$.}
\(\Phi(x) = (
    x,\, x^2,\,  \ldots,\, x^d)^\top, \)
and $\theta_i^* \in  \mathbb R^d$ is the private \textit{type parameter} of agent $i$.
The system planner must issue incentives that both facilitate the learning of agent types and regulate their behavior toward the intended profile $x_\des$.

\begin{assumption}
\label{ass:types}
    For every agent $i\in[n]$, there exist positive constants $m_i$ and $M_i$, such that $\theta_i^*$ belongs to a set of admissible types $\Theta_i$ that satisfies
    \[ \theta_i^* \in \Theta_i \triangleq \{\theta \in \mathbb R^d :  m_i \leq \theta^\top \nabla^2 \Phi(x_i) \leq  M_i, \, \forall x_i \in \mathcal{X}_i\}.\]
\end{assumption}
Under Assumption~\ref{ass:types}, each $\Theta_i$ is closed and convex, and the nominal cost $\ell_i$ of agent $i\in [n]$ is $m_i$-strongly convex and has an $M_i$-Lipschitz gradient on $\mathcal{X}_i$.

When the planner issues an arbitrary incentive $p_i \in \mathbb R$ to agent $i$, the agent's response is given by a \textit{best-response map} $x_i^*:\mathbb R \rightarrow \mathcal{X}_i$, which satisfies 
\begin{equation}
\label{eq:agent_full_response}
x_i^*(p_i) \in \argmin_{x\in \mathcal X_i} (\theta_i^{*\top} \Phi(x) +p_i x ).
\end{equation}
\begin{remark}
    Throughout this work, we assume that each agent immediately selects the best response according to \eqref{eq:agent_full_response} and do not consider the procedure by which the minimization problem is solved. 
    This is a
    standard (idealized) best-response model, often utilized in incentive design~\cite{liSociallyOptimalEnergy2024, ratliffAdaptiveIncentiveDesign2021}. 
\end{remark}
\begin{remark}
    Due to the decoupled structure of $\ell_i$ in \eqref{eq:costs}, agents' strategies are independent of other agents, as is evident in the best-response~\eqref{eq:agent_full_response}. This setting is often encountered in applications of incentive design, including demand-side energy management~\cite{liDistributedOnlinePricing, samadiDemandSideMagement2012} and federated learning data markets~\cite{dingContractDesignFederated2021}, where users' energy-scheduling  and participation cost overheads are locally determined. We will extend
    the framework introduced in this paper to include 
    multi-agent interactions in a following journal work.
\end{remark}

The next result characterizes the agent’s best-response map under Assumption~\ref{ass:types}, and establishes a bijection between incentives $p_i$ and responses $x_i^*(p_i)$, in an appropriate open subset of $\mathbb R$.
This result is developed from the case of unconstrained strategies presented in~\cite[Lemma~1]{liDistributedOnlinePricing}.
\begin{lemma}
\label{lemma:NE_bijection_mapping}
    Under Assumption~\ref{ass:types}, for each agent $i \in [n]$ and $\theta_i^* \in \Theta_i$, there is a
    $C^1$ map $h(p)$ such that the best-response map $x_i^*(p) = h(p)$. Over the open set $\mathcal{P}_i = h^{-1}(\interior \mathcal{X}_i)$, it holds that, for any $p\in \mathcal{P}_i$, 
    \begin{equation}
    \label{eq:first_order_condition}
    \theta_i^{*\top} \nabla \Phi(h(p)) + p =0, \text{ and}
    \end{equation}
    \begin{equation}
    \label{eq:bijection_derivative_bounded}
    -\frac{1}{ m_i} \leq h'(p) \leq -\frac{1}{ M_i},
    \end{equation}
\end{lemma}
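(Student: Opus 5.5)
Write $\mathcal{X}_i=[a_i,b_i]$ and $g(x)=\theta_i^{*\top}\nabla\Phi(x)$, which is the derivative of the nominal cost $\ell_i$. By Assumption~\ref{ass:types} we have $m_i\le g'(x)=\theta_i^{*\top}\nabla^2\Phi(x)\le M_i$ on $\mathcal{X}_i$. Since $\Phi$ is polynomial, $g$ is a polynomial, hence smooth on all of $\mathbb{R}$. On the compact interval, $g$ is a strictly increasing bijection from $[a_i,b_i]$ onto $[g(a_i),g(b_i)]$.

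\textbf{Step 1: the best response is well defined.} The objective $\theta_i^{*\top}\Phi(x)+px$ is $m_i$-strongly convex on $\mathcal{X}_i$, so the minimizer in \eqref{eq:agent_full_response} exists and is unique. The first-order (KKT) conditions identify it explicitly:
\[
h(p)=\begin{cases} a_i, & -p\le g(a_i),\\ g^{-1}(-p), & -p\in(g(a_i),g(b_i)),\\ b_i, & -p\ge g(b_i).\end{cases}
\]
Equivalently, $h(p)=\Pi_{\mathcal{X}_i}\big(\tilde g^{-1}(-p)\big)$, where $\tilde g$ is any strictly increasing extension of $g$. This shows $h$ is continuous and nonincreasing.

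\textbf{Step 2: the interior region.} The set $\mathcal{P}_i=h^{-1}(\interior\mathcal{X}_i)$ is exactly the open interval $(-g(b_i),-g(a_i))$, so it is open. On this interval, $h(p)\in\interior\mathcal{X}_i$, the constraint is inactive, and stationarity gives $g(h(p))+p=0$, which is \eqref{eq:first_order_condition}.

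\textbf{Step 3: differentiability and the derivative bounds.} Apply the inverse function theorem to $g$: its derivative satisfies $g'\ge m_i>0$, so $h=g^{-1}(-\cdot)$ is $C^1$ on $\mathcal{P}_i$. Implicit differentiation of \eqref{eq:first_order_condition} gives
\[
g'(h(p))\,h'(p)+1=0,\qquad\text{so}\qquad h'(p)=-\frac{1}{\theta_i^{*\top}\nabla^2\Phi(h(p))}.
\]
Since $h(p)\in\mathcal{X}_i$, the bounds $m_i\le\theta_i^{*\top}\nabla^2\Phi(h(p))\le M_i$ immediately yield \eqref{eq:bijection_derivative_bounded}. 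It follows that $h$ is a strictly decreasing bijection from $\mathcal{P}_i$ onto $\interior\mathcal{X}_i$.

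\textbf{Main obstacle: the global $C^1$ claim.} Outside $\mathcal{P}_i$, $h$ is constant, so $h'=0$ there. At the two boundary points $p=-g(a_i)$ and $p=-g(b_i)$, the one-sided derivatives are $-1/g'(\cdot)\ne0$ and $0$, so the clipped map is Lipschitz but not differentiable at those points. The cleanest fix is to read the statement as asserting the $C^1$ property on the open set $\mathcal{P}_i$, where everything used later lives. Alternatively, one can take $h$ to be the smooth unconstrained inverse $\tilde g^{-1}(-p)$, which agrees with $x_i^*$ on $\mathcal{P}_i$ and is globally $C^1$, for instance by extending $g$ outside $\mathcal{X}_i$ linearly with slope in $[m_i,M_i]$. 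I would adopt this extension interpretation and note explicitly that $x_i^*=h$ holds on $\mathcal{P}_i$, while elsewhere $x_i^*=\Pi_{\mathcal{X}_i}\circ h$.
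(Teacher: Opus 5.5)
Your argument is correct and is essentially the paper's: apply the inverse function theorem to the strictly monotone map $x\mapsto\theta_i^{*\top}\nabla\Phi(x)$ on $\interior\mathcal{X}_i$, take $\mathcal{P}_i$ to be the image of the interior, and bound $h'(p)=-1/\theta_i^{*\top}\nabla^2\Phi(h(p))$ using Assumption~\ref{ass:types}. Your additional points are accurate refinements the paper glosses over: the strong-convexity/KKT description of the clipped best response, and the remark that $x_i^*$ is $C^1$ only on $\mathcal{P}_i$ (the paper's proof also establishes no more than this). One small fix: a globally $C^1$ linear extension needs slopes exactly $g'(a_i)$ and $g'(b_i)$ at the two ends, not arbitrary slopes in $[m_i,M_i]$.
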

\begin{proof}
Consider $x^*_i(p_i)$ to be a solution of \eqref{eq:agent_full_response} within $\interior \mathcal{X}_i$.
Then $x^*_i(p_i)$ satisfies the first-order optimality condition for $c_i(x_i,p_i)$, and therefore
\begin{equation}
    \label{eq:lemma1_eq1}
    -\theta_i^{*\top} \nabla\Phi(x^*_i(p_i)) + p_i =0.
\end{equation}

The mapping $f: x_i \mapsto -\theta_i^{*\top} \nabla \Phi(x_i)$ is $\mathcal{C}^1$ with a nonzero derivative so, by the inverse function theorem, there exists a $\mathcal{C}^1$ map $f^{-1}: -\theta_i^{*\top} \nabla\Phi(x_i) \mapsto x_i$. Due to \eqref{eq:lemma1_eq1}, $f^{-1}$ maps $p_i \mapsto x^*_i(p_i)$
and satisfies
\[(f^{-1})'(p_i) = \frac{1}{f'(x_i^*(p_i))} = \frac{-1}{\theta_i^{*\top}\nabla^2\Phi(x_i^*(p_i))}.\]
Define the open set $\mathcal{P}_i = f(\interior \mathcal{X})$.
For any  $p \in \mathcal{P}_i$ it holds that $\theta_i^{*\top}  \nabla  \Phi(f^{-1}(p)) + p= - f(f^{-1}(p)) + p  = 0$.
\end{proof}

The planner can only learn from agent responses that vary smoothly with incentives, so we introduce the notion of an \textit{informative region} $\mathcal{P}_i$, defined in Lemma~\ref{lemma:NE_bijection_mapping} for each agent $i\in [n]$. We refer to any $p \in \mathcal{P}_i$ as an informative incentive. 
\begin{remark}
    The set $\mathcal{P}_i$ of each agent depends on $\theta_i^*$, as evidenced by \eqref{eq:first_order_condition}, and is unknown to the system planner.
    However, the planner can identify whether an incentive is informative
    by the observation $x^*_i(p) \in \interior \mathcal{X}_i$.
\end{remark}

If the agents' preferences were known, the planner could elicit the desired response $x_\des = (x_{1,\des}, \ldots, x_{n,\des})^\top$ by issuing to $i\in [n]$ the incentive $p_{i,\des} = - \theta_i^{*\top} \nabla \Phi(x_{i,\des})$. 
In the absence of this information, the planner must construct an estimator $\hat \theta_i(t)$ of $\theta_i^*$ from incentive-response trajectories.  
Given estimate $\hat \theta_i(t)$, consider the adaptive incentive scheme
\begin{equation}
\label{eq:self_tuning_regulator}
p_i(t+1) = - \hat\theta_i(t)^\top \nabla \Phi(x_{i,\des}),
\end{equation}
which approximates the (unrealizable) regulator that assumes knowledge of $\theta_i^*$.

Denote the trajectory of player $i$ up to time $t$ as $\mathcal{T}_i(t) = \{(p_i(\tau), x_i(\tau))\}_{\tau \leq t}$, where $x_i(t) = x_i^*(p_i(t))$. Whenever $x_i(t) \in \interior \mathcal{X}_i$, the observation pair $(\hat p_i(t), x_i(t))$ 
satisfies the observation equation
\begin{equation}
\label{eq:model}
\hat p_i(t) 
= -\theta_i^{*\top} \nabla\Phi(x_i(t)) + e_i(t),
\end{equation}
where $e_i(t)$ is an unobservable noise.
We make the following assumption on the incentive–response noise.

\begin{assumption}
\label{ass:noise}
For each $i\in [n]$, the process $\{e_i(t)\}_{t\geq 1}$ in \eqref{eq:model} is i.i.d., zero-mean and satisfies $\sup_{t\geq 1}\mathbb E[|e_i(t)|^a] < \infty$ for $a > 2$. Moreover, $e_i(t)$ is independent of $x_i(t)$.
\end{assumption}
\begin{remark}
    The assumption that $e_i(t)$ is independent of $x_i(t)$ is standard in adaptive incentive design
    \cite{ratliffAdaptiveIncentiveDesign2021}. It reflects a setting in which type estimation and incentive design are performed by distinct entities, and only the noisy incentive signal is available for estimation.
    When independence does not hold, 
    \eqref{eq:model} becomes an Error-in-Variables (EIV) regression, whose analysis is deferred to a following journal version.
\end{remark}

We quantify the performance of a selected incentive sequence with respect to the cumulative tracking error from $x_\des$. Formally, define the $t$-stage regret of a given incentive sequence $\{p(\tau)\}_{\tau \leq t}$ to be
\begin{equation}
\label{eq:regret_definition}
\begin{aligned}
        R_t
        = \sum_{\tau = 1}^t \|x(\tau) - x_\des\|^2_2 = \sum_{\tau = 1}^t \|x^*(p(\tau)) - x_\des\|_2^2,
\end{aligned}
\end{equation}
where $p(t)$ and $x(t)$ denote $p(t) = (p_1(t),\, \ldots,\, p_n(t))^\top$, and  $x(t) = (x_1^*(p_1(t)),\, \ldots,\, x_n^*(p_n(t)))^\top$, respectively.

The planner's objective is twofold: to ensure accurate type estimation through sufficient exploration of the parameter space, and to regulate the collective behavior toward the social optimum. We formalize this task in the Regret-minimizing Adaptive Incentive Design (RAID) problem.

\begin{problem}[Regret--minimizing Adaptive Incentive Design -- RAID]
    Given a social cost function $\Psi(\cdot)$ and a desired agent profile $x_\des\in \argmin_{x\in \mathcal{X}} \Psi(x)$,
    a system planner with no prior knowledge of the true type 
    $\theta^* = (\theta_1^{*\top}, \ldots, \theta_n^{*\top})^\top$
    aims to design a type estimator 
    $\hat \theta(t)$ 
    and an incentive sequence $\{p(\tau)\}_{\tau \geq 1}$ satisfying two goals:
    \begin{enumerate}
        \item \emph{Strong consistency} of $ \hat \theta(t)$: For any initial estimation $\hat \theta(0)=\theta_0$, the estimator $\hat \theta(t)$ converges to $ \theta^*$ a.s., and
        \item \emph{Sublinear regret accumulation}: The $t$-stage average regret of the policy  $\{p(\tau)\}_{\tau \leq t}$, defined as
        in \eqref{eq:regret_definition},
        vanishes asymptotically a.s., that is,
        \(R_t = o(t) \text{ a.s. }\)
    \end{enumerate}
\end{problem}

Incentive Design can be seen as a Stackelberg game, wherein
the planner acts as a Stackelberg leader by committing to an incentive policy which affects followers' cost functionals and induces their best responses. In this way, each pair $(p_{\des}, x_{\des})$ chosen by the planner constitutes a Stackelberg equilibrium.
Achieving vanishing average regret corresponds to driving the incentive-response pair 
$(p(t),x(t))$ to the Stackelberg equilibrium $(p_\des, x_\des)$
in a mean-square sense. 
The construction of the type estimator to learn  $\theta^*$ and the conditions required for its strong consistency are presented in the next section.

\section{Strongly Consistent Type Estimation}
\label{sec:type_estimation}

In this section, we construct a type estimator and analyze its convergence under different levels of excitation present in the agents’ trajectories $\mathcal{T}_i(t)$.
Given a collection of incentive-response observations for each agent $i$ up to time $t$,
the estimator $\hat \theta_i(t)$ for $\theta_i^*$ can be constructed by
\begin{equation}
\label{eq:theta_i_minimization_problem}
    \hat \theta_i(t)  = \argmin_{\theta_i \in \Theta_i} \sum_{\substack{\tau \leq t: \\ 
    p_{i}(\tau) \in \mathcal{P}_i}} 
    |\theta_i^\top \nabla\Phi(x_i(\tau)) + \hat p_i(\tau)|^2.  
\end{equation}
Notice that agent response $x_i(t)$ satisfies \eqref{eq:model} only when the corresponding incentive $p_i(t)$ is informative, i.e., when $p_i(t)\in \mathcal{P}_i$. Due to Lemma~\ref{lemma:NE_bijection_mapping}, the event $\{p_i(t) \in \mathcal{P}_i\}$ in \eqref{eq:theta_i_minimization_problem} is equivalent to $\{x_i(t) \in \interior \mathcal{X}_i\}$,
which can be verified by the planner without requiring knowledge of $\theta^*$. 
Problem \eqref{eq:theta_i_minimization_problem} can be recursively solved by
\begin{subequations}
\label{eq:solution_estimator}
\begin{equation}
\label{eq:theta_i_solution_estimator}
\begin{aligned}
    \hat \theta_i (t)  &=
    \hat \theta_i(t-1)  \\ &- \delta_i(t)\Sigma_i(t)\xi_i(t) \Big(\hat p_i(t) + \xi_i(t)^\top \hat\theta_i(t-1)\Big),
\end{aligned}
\end{equation}
\begin{equation}
\label{eq:Sigma_i_solution_estimator}
\begin{aligned}
        \Sigma_i(t) &= \Sigma_i(t-1) \\
        & - \delta_i(t)\frac{\Sigma_i(t-1)\xi_i(t)\xi_i(t)^\top\Sigma_i(t-1)}
      {1 + \delta_i(t)\,\xi_i(t)^\top \Sigma_i(t-1)\,\xi_i(t)}.
\end{aligned}
\end{equation} 
\end{subequations}
where we denote $\xi_i(t) = \nabla\Phi(x_i(t))$, $\delta_i(t) = \mathbf{1} \{x_i(t) \in \interior \mathcal{X}_i\}$ and $\mathbf{1}\{\cdot\}$ is the indicator function. 
Denote $\hat \theta(t) = (\hat \theta_1(t)^\top , \ldots, \hat \theta_n(t)^\top)^\top$.
Observe that~\eqref{eq:Sigma_i_solution_estimator} represents a rank-one update on the information matrix $\Sigma_i(t)^{-1}$, that is,
\begin{equation}
\label{eq:Sigma_i_rank_one}
    \Sigma_i(t)^{-1} = \Sigma_i(t-1)^{-1} + \xi_i(t)\xi_i(t)^\top \delta_i(t).\tag{\ref{eq:Sigma_i_solution_estimator}'}
\end{equation}

To simplify notation, we will denote the regressor vectors as $\xi_i(t) = \nabla \Phi(x_i(t))$ and define $\lambda_i(t) = \lambda_{\min}(\Sigma_i(t)^{-1})$.

\subsection{Convergence of the Type Estimator}
 In this subsection, we establish a sufficient condition for the strong consistency of  \eqref{eq:solution_estimator} with respect to the spectrum of the information matrix $\Sigma_i(t)^{-1}$.

\begin{theorem}
\label{thm:sufficient_strong_consistency}
    Suppose Assumptions~\ref{ass:types} and \ref{ass:noise} hold. 
    For each $i\in [n]$, the estimator $\hat \theta_i(t)$ given in
    \eqref{eq:solution_estimator} satisfies
    \[\|\hat \theta_i(t) - \theta_i^*\|_2^2 = O\left(\lambda_i(t)^{-1} \log t\right) \text{ a.s.},\]
    where $\theta_i^*$ is the agent's true type. 
    Consequently, $ \hat \theta_i(t) \rightarrow\theta_i^*$ a.s. if 
    $\log t = o(\lambda_{i}(t))$ a.s.
\end{theorem}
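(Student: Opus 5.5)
The plan is to reduce the statement to the classical strong-consistency result for recursive least squares of Lai and Wei~\cite{laiLeastSquaresEstimates1982}, applied to the subsequence of informative times. Fix $i\in[n]$ and, for this argument, take the recursion \eqref{eq:solution_estimator} as the unconstrained RLS solution of \eqref{eq:theta_i_minimization_problem}. The constraint $\theta_i\in\Theta_i$ will be addressed afterwards: since $\Theta_i$ is closed and convex and $\theta_i^*\in\Theta_i$, the constrained minimizer can only move closer to $\theta_i^*$ in the $\Sigma_i(t)^{-1}$-weighted norm, so the error bound transfers.

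\textbf{Step 1: error identity.} Write $\tilde\theta_i(t)=\hat\theta_i(t)-\theta_i^*$. Substituting the observation model \eqref{eq:model}, $\hat p_i(t)=-\xi_i(t)^\top\theta_i^*+e_i(t)$, into \eqref{eq:theta_i_solution_estimator} and using the rank-one identity \eqref{eq:Sigma_i_rank_one}, I obtain the closed form
\[
\Sigma_i(t)^{-1}\tilde\theta_i(t)=\Sigma_i(0)^{-1}\tilde\theta_i(0)-\sum_{\tau\le t}\delta_i(\tau)\xi_i(\tau)e_i(\tau).
\]
This identity already encodes the key feature of the setting: only informative times with $\delta_i(\tau)=1$ contribute, and on those times \eqref{eq:model} is valid.

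\textbf{Step 2: noise term.} Let $\mathcal F_t=\sigma\{x_i(\tau+1),e_i(\tau):\tau\le t\}$. By Assumption~\ref{ass:noise}, $e_i(t)$ is i.i.d., zero-mean, and independent of $x_i(t)$, hence of $\delta_i(t)\xi_i(t)$. Thus $\{e_i(t)\}$ is a martingale difference sequence with $\sup_t\mathbb E[|e_i(t)|^a\mid\mathcal F_{t-1}]<\infty$ for some $a>2$, and $\delta_i(t)\xi_i(t)$ is $\mathcal F_{t-1}$-measurable. The self-normalized martingale estimate of Lai and Wei then gives
\[
\Big\|\Sigma_i(t)^{1/2}\sum_{\tau\le t}\delta_i(\tau)\xi_i(\tau)e_i(\tau)\Big\|^2=O\big(\log\lambda_{\max}(\Sigma_i(t)^{-1})\big)\ \text{a.s.}
\]
Combining this with Step 1 yields
\[
\lambda_i(t)\,\|\tilde\theta_i(t)\|^2\le\tilde\theta_i(t)^\top\Sigma_i(t)^{-1}\tilde\theta_i(t)=O\big(\log\lambda_{\max}(\Sigma_i(t)^{-1})\big)+O(1).
\]

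\textbf{Step 3: bounding $\lambda_{\max}$.} Since $x_i(t)\in\mathcal X_i$ is compact and $\xi_i(t)=\nabla\Phi(x_i(t))$ is polynomial, $\|\xi_i(t)\|\le C$ uniformly. Hence $\lambda_{\max}(\Sigma_i(t)^{-1})\le\lambda_{\max}(\Sigma_i(0)^{-1})+C^2t=O(t)$, so $\log\lambda_{\max}=O(\log t)$. Dividing by $\lambda_i(t)$ gives the claimed rate. The consistency statement follows immediately: if $\log t=o(\lambda_i(t))$, the bound tends to zero almost surely.

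\textbf{Main obstacle.} The delicate part is Step 2. Lai and Wei's lemma requires a regressor predictable with respect to a filtration in which $e_i$ is a martingale difference. Here, however, $x_i(t)$ is generated by incentives built from past estimates and therefore from past noise, so I must check that the stated independence suffices to place $\delta_i(t)\xi_i(t)$ in $\mathcal F_{t-1}$ while keeping $\mathbb E[e_i(t)\mid\mathcal F_{t-1}]=0$. I would first try interpreting the independence in Assumption~\ref{ass:noise} as independence of $e_i(t)$ from $\mathcal F_{t-1}$, with $x_i(t)$ included in that filtration.

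A secondary point is the projection onto $\Theta_i$, if the estimator is meant to be constrained. For this I would invoke the nonexpansiveness of the projection in the $\Sigma_i(t)^{-1}$ metric.
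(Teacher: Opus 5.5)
Your proposal is correct and follows the paper's route. The paper cites Lai and Wei's Theorem~1 to get $\|\hat\theta_i(t)-\theta_i^*\|_2^2=O(\log\lambda_{\max}(t)/\lambda_i(t))$ and then uses compactness of $\mathcal X_i$ to bound $\lambda_{\max}(t)=O(t)$; this is your Steps 1--3, with the Lai--Wei theorem unpacked into the error identity and the self-normalized martingale bound, and your filtration $\sigma\{x_i(\tau+1),e_i(\tau):\tau\le t\}$ is actually the one Lai--Wei's predictability hypothesis needs (the paper's $\sigma(e_i(s):s\le t)$ does not make $\xi_i(t)$ measurable), so reading Assumption~\ref{ass:noise} as independence of $e_i(t)$ from that past is the right fix and is what the paper implicitly assumes too.
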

\begin{proof}
    Let $\mathcal{F}_i(t) = \sigma(e_i(s): s\leq t)$ denote the filtration generated by $e_i(t)$. Under Assumption~\ref{ass:noise}, $e_i(t)$ is a martingale difference sequence and satisfies
    \(\sup_{t\geq 1}\mathbb E[|e_i(t)|^a \mid \mathcal{F}_i(t-1)] < \infty, \text{ for }a>2.\)
    By~\cite[Theorem~1]{laiLeastSquaresEstimates1982} and the observation that $\|\hat \theta_i(t) - \theta_i^*\|_2 = O(\|\hat \theta_i(t) - \theta_i^*\|_\infty)$, it holds that
    \[\|\hat \theta_i (t) -  \theta_i^*\|_2 = O \Big(\sqrt{\frac{\log(\lambda_{\max}(t))}{\lambda_i(t)}}\Big) \text{ a.s.},\]
    where $\lambda_{\max}(t)$ denotes the maximum eigenvalue of 
    $\Sigma_i(t)^{-1}$. Due to the compactness of  $\mathcal{X}_i$, it also holds that
    \(\lambda_{\max}(t) \leq t \max_{x \in \mathcal{X}_i} \|\nabla \Phi(x)\|_2^2 = O(t),\)
    and the result follows.
\end{proof}

The excitation required by~\cite{laiLeastSquaresEstimates1982} is a diminishing excitation condition that guarantees strong consistency in stochastic regression (see~\cite[Example 1]{laiLeastSquaresEstimates1982}).  Since the spectrum of $\Sigma_i(t)^{-1}$ depends on the responses $\{x_i(\tau)\}_{\tau \leq t}$, the growth condition in Theorem~\ref{thm:sufficient_strong_consistency} can be guaranteed indirectly by adding appropriate noise in incentives $p_i(t)$. The subsection that follows shows that independent, normally-distributed incentives satisfy this requirement.

\subsection{Excitation under Normally Distributed Incentives}

In regression \eqref{eq:model}, the noise $e_i(t)$ does not affect the regressor sequence by Assumption~\ref{ass:noise}, so 
the planner must explicitly inject excitation via $p_i(t)$. Theorem~\ref{thm:information_growth} proves that an i.i.d. Gaussian probing policy suffices to obtain linear growth in the spectrum of the information matrix.

\begin{theorem}
\label{thm:information_growth}
    Suppose Assumptions~\ref{ass:types} and \ref{ass:noise} hold. For each agent $i \in [n]$, the i.i.d incentive policy $\{p_i(\tau)\}_{\tau \leq t}$ with each $p_i(\tau)\sim \mathcal{N}(0,\sigma^2)$
    guarantees that 
    \(\lambda_i(t) = \Theta(t) \text{ a.s.}\) 
\end{theorem}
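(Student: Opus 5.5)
The upper bound is immediate. By \eqref{eq:Sigma_i_rank_one}, $\Sigma_i(t)^{-1} = \Sigma_i(0)^{-1} + \sum_{\tau\le t}\delta_i(\tau)\xi_i(\tau)\xi_i(\tau)^\top$. Because $\mathcal{X}_i$ is compact, $\|\xi_i(\tau)\|_2^2\le \max_{x\in\mathcal{X}_i}\|\nabla\Phi(x)\|_2^2$, so $\lambda_i(t)\le\lambda_{\max}(\Sigma_i(t)^{-1}) = O(t)$.

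For the lower bound, the plan is to apply a strong law of large numbers to the i.i.d.\ matrices $Z_\tau = \delta_i(\tau)\xi_i(\tau)\xi_i(\tau)^\top$. Since $p_i(\tau)$ are i.i.d.\ and the best response $x_i(\tau)=h(p_i(\tau))$ is a deterministic function of $p_i(\tau)$ (Lemma~\ref{lemma:NE_bijection_mapping}), the $Z_\tau$ are i.i.d.\ and bounded. The SLLN then gives
\[
\frac1t\sum_{\tau\le t} Z_\tau \to G \triangleq \mathbb E\big[\mathbf 1\{p\in\mathcal{P}_i\}\nabla\Phi(h(p))\nabla\Phi(h(p))^\top\big] \quad \text{a.s.},
\]
with $p\sim\mathcal N(0,\sigma^2)$. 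Because $\lambda_{\min}$ is continuous, $\lambda_i(t)/t\to\lambda_{\min}(G)$ a.s. It therefore suffices to show $G\succ0$.

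To show positive definiteness, fix a unit vector $v\in\mathbb R^d$. Then $v^\top G v=\mathbb E[\mathbf 1\{p\in\mathcal P_i\}(v^\top\nabla\Phi(h(p)))^2]$. The set $\mathcal P_i$ is a nonempty open interval: it is the image of $\interior\mathcal X_i$ under the continuous strictly monotone map $f(x)=-\theta_i^{*\top}\nabla\Phi(x)$, whose derivative has magnitude at least $m_i>0$. The Gaussian density is positive on all of $\mathbb R$, so $p$ lands in $\mathcal P_i$ with positive probability. On $\mathcal P_i$, the map $h=f^{-1}$ is a $C^1$ bijection onto $\interior\mathcal X_i$ with $|h'|\ge 1/M_i$. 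Hence the pushforward of the Gaussian restricted to $\mathcal P_i$ has a density that is strictly positive on the nondegenerate interval $\interior\mathcal X_i$.

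Now $v^\top\nabla\Phi(x)=\sum_{k=1}^d k v_k x^{k-1}$ is a nonzero polynomial in $x$ of degree at most $d-1$. It has finitely many roots, so it is nonzero on a subset of $\interior\mathcal X_i$ of positive Lebesgue measure, and hence of positive probability under the pushforward. This gives $v^\top Gv>0$. Because the unit sphere is compact and $v\mapsto v^\top Gv$ is continuous, $\lambda_{\min}(G)>0$, and therefore $\lambda_i(t)=\Omega(t)$ a.s. Combined with the upper bound, this yields $\lambda_i(t)=\Theta(t)$.

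The main obstacle is establishing $G\succ0$. It rests on two facts: informative incentives occur with positive probability, and the monomial-derivative regressors $(1,2x,\dots,dx^{d-1})$ are linearly independent as functions on any interval. The second is a Vandermonde/polynomial-root argument. A minor point to check is that the SLLN applies entrywise; this holds because the entries are bounded and the $Z_\tau$ are i.i.d., and the noise $e_i$ plays no role since the regressors depend only on $p_i$.
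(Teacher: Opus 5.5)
Your proposal is correct, and its skeleton matches the paper's proof. The paper also applies the strong law of large numbers to the i.i.d.\ bounded matrices $\delta_i(\tau)\xi_i(\tau)\xi_i(\tau)^\top$ and transfers the limit to $\lambda_{\min}$; it uses Weyl's inequality, which is the same as your appeal to continuity of $\lambda_{\min}$. It then concludes from positive definiteness of the limit $\mathbb E[\xi_i\xi_i^\top\delta_i]$.

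The genuine difference is how you prove that last fact. The paper (Lemma~\ref{lemma:excitation}) stays in incentive space. It writes $x_i=h(0)+h'(w_i)p_i$ by the mean value theorem, expands $f(x_i)f(x_i)^\top=\Pi(h(0))D_2f(p_i)f(p_i)^\top D_2\Pi(h(0))^\top$, and appeals to Lemma~\ref{lemma:PD_monomial_moments}, the positive definiteness of monomial moments of a Gaussian conditioned on an interval. You move to response space instead. On the informative event, $x_i=h(p_i)$ has density $\phi_\sigma(f(x))\,|f'(x)|>0$ on $\interior\mathcal X_i$, where $\phi_\sigma$ is the $\mathcal N(0,\sigma^2)$ density, and a nonzero polynomial vanishes at only finitely many points.

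Your route is shorter and avoids a weak step in the paper's reduction. Because $w_i$ depends on $p_i(t)$, the matrix $D_2$ is random and cannot be pulled out of the expectation. What is really needed is positive definiteness of the monomial moments of $h(p_i)-h(0)$, whose law is not Gaussian. Establishing that requires exactly your density argument; the proof of Lemma~\ref{lemma:PD_monomial_moments} only uses positivity of the density on an interval, so it applies verbatim to the law of $x_i$.

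Your version also never applies the mean value theorem across $\partial\mathcal P_i$, where the constrained best response has kinks. Finally, it makes clear that Gaussianity is inessential: any i.i.d.\ probing law whose density is positive on $\mathcal P_i$ (or even on a subinterval of it) yields $\lambda_i(t)=\Theta(t)$.
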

\begin{proof}
    Presented in Section~\ref{sec:proofs}.
\end{proof}

An intermediate result of independent interest is that i.i.d. Gaussian incentives are persistently exciting for the kernel regression~\eqref{eq:model}, despite the nonlinearity introduced by the mapping $\nabla \Phi(\cdot)$. Crucially, this excitation holds despite individual incentives not necessarily belonging to the informative region \(\mathcal P_i\) for any agent $i\in[n]$.

\begin{lemma}
\label{lemma:excitation}
    Under Assumptions~\ref{ass:types} and \ref{ass:noise}, for each agent $i\in [n]$ and i.i.d incentives $p_i(t)$ distributed according to $ \mathcal{N}(0,\sigma^2)$,
    there exists
    $\delta > 0 $ such that 
    \[  \mathbb E\left[ \xi_i(t)\xi_i(t)^\top \delta_i(t)\right] \succeq \delta \mathbb I.\]
\end{lemma}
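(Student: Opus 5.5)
The argument is a contradiction argument on the quadratic form. Let $M = \mathbb E[\xi_i(t)\xi_i(t)^\top \delta_i(t)]$. Because the $p_i(t)$ are i.i.d. and $x_i(t) = h(p_i(t))$ is a deterministic function of $p_i(t)$ (Lemma~\ref{lemma:NE_bijection_mapping}), $M$ does not depend on $t$. It is also finite, since $\nabla\Phi$ is bounded on the compact set $\mathcal{X}_i$. So it suffices to show that $M$ is positive definite; we may then take $\delta = \lambda_{\min}(M)>0$.

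Suppose, for contradiction, that $v^\top M v = 0$ for some unit vector $v\in\mathbb R^d$. Then
\[
\mathbb E\bigl[(v^\top \nabla\Phi(h(p_i)))^2\,\mathbf 1\{p_i\in\mathcal P_i\}\bigr]=0 .
\]
Hence the polynomial $q(x) = v^\top\nabla\Phi(x) = \sum_{k=1}^d k v_k x^{k-1}$ vanishes at $h(p)$ for Lebesgue-almost every $p\in\mathcal P_i$, because the Gaussian density is strictly positive on $\mathbb R$. By continuity of $q\circ h$, it then vanishes at every point of $\mathcal P_i$.

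The key step is to show that $\mathcal P_i$ is a nonempty open set and that $h(\mathcal P_i)$ contains an interval of positive length. By Lemma~\ref{lemma:NE_bijection_mapping}, $h$ is $C^1$ and strictly decreasing with $h'(p)\le -1/M_i<0$ on $\mathcal P_i$. Since $\mathcal{X}_i$ is a nondegenerate interval, the map $f(x) = -\theta_i^{*\top}\nabla\Phi(x)$ is strictly monotone: its derivative satisfies $f'(x) = -\theta_i^{*\top}\nabla^2\Phi(x) \le -m_i<0$ by Assumption~\ref{ass:types}. Therefore $\mathcal P_i = f(\interior\mathcal{X}_i)$ is a nonempty open interval, and $h$ maps it onto $\interior\mathcal{X}_i$.

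Consequently $q$ vanishes on the whole open interval $\interior\mathcal{X}_i$. A polynomial with infinitely many roots is identically zero, so $k v_k = 0$ for all $k$, i.e.\ $v=0$, which contradicts $\|v\|=1$. Note that the monomial structure of $\Phi$ is used exactly here: it makes the components of $\nabla\Phi$, namely $1,2x,\dots,dx^{d-1}$, linearly independent on any interval.

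The main subtlety I expect is the interplay between the indicator $\delta_i(t)$ and the Gaussian: we need $\Pr(p_i\in\mathcal P_i)>0$ together with the conclusion that the response varies over an interval there. This is exactly what the openness and nonemptiness of $\mathcal P_i$ deliver, combined with the full support of $\mathcal N(0,\sigma^2)$. Assumption~\ref{ass:noise} is needed only to guarantee that the regressor is independent of the noise, so that $M$ involves the incentive distribution alone. If desired, a quantitative $\delta$ can be obtained by continuity and compactness: $v\mapsto v^\top M v$ is continuous and strictly positive on the unit sphere, so its minimum over the sphere is attained and strictly positive.
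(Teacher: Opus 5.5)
Your argument is correct, and it takes a different route from the paper.

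\textbf{The paper's route.} It factors out $\mathbb P(p_i(t)\in\mathcal P_i)$ and writes $\xi_i(t)=D_1 f(x_i(t))$ with $f(x)=(1,x,\dots,x^{d-1})^\top$. It then uses a mean-value step $x_i(t)=h_i(0)+h_i'(w_i)p_i(t)$ and a binomial expansion to write $f(x_i(t))=\Pi(h_i(0))D_2 f(p_i(t))$, where $D_2=\mathrm{diag}(1,h_i'(w_i),\dots,h_i'(w_i)^{d-1})$. Positive definiteness is then imported from an auxiliary lemma: the truncated Gaussian moment matrix $\mathbb E[f(p)f(p)^\top\mid p\in A]$ is positive definite for any open interval $A$.

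\textbf{Your route.} You never change variables back to the incentive. A null direction $v$ of $M$ forces the polynomial $v^\top\nabla\Phi$ to vanish on $h(\mathcal P_i)=\interior\mathcal X_i$, hence identically.

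\textbf{What each buys.} Your argument is more robust and more general. It uses only that the incentive law has positive density on $\mathcal P_i$. In fact it suffices that the support of the law of $x_i(t)$ restricted to $\interior\mathcal X_i$ contains at least $d$ points. It also avoids two delicate points in the paper's factorization. First, $D_2$ depends on the random intermediate point $w_i$, so it cannot simply be moved outside the expectation. Second, the mean-value step along the segment from $0$ to $p_i(t)$ needs $h_i$ to be differentiable with nonzero derivative there, which is only guaranteed inside $\mathcal P_i$. The paper's route aims to tie the excitation level to Gaussian moments of the incentive itself, though as written neither argument gives an explicit $\delta$.

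\textbf{Minor correction.} Contrary to your closing remark, Assumption~\ref{ass:noise} plays no role in this lemma. The quantities $\xi_i(t)$ and $\delta_i(t)$ are functions of $p_i(t)$ alone, and the noise $e_i(t)$ only enters the observed $\hat p_i(t)$.
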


\begin{proof}
    Presented in Section~\ref{sec:proofs}.
\end{proof}

Theorem~\ref{thm:information_growth} establishes that i.i.d Gaussian incentives provide sufficient excitation to the parameter estimator given in \eqref{eq:solution_estimator}. This will be utilized in the next section to develop an adaptive incentive policy that guarantees strong consistency.

\section{An Algorithm for Regret-Minimizing\\Adaptive Incentive Design}
\label{sec:algorithm}
In this section, we leverage the convergence of the estimator in 
Section~\ref{sec:type_estimation} to design an adaptive incentive policy that achieves both objectives of RAID. The proposed mechanism, given in 
Algorithm~\ref{alg:one}, alternates between exploration and exploitation phases to guarantee vanishing average regret.

\SetKwComment{Comment}{/* }{ */}
\SetKwInOut{KwParameters}{Parameters}
\SetKwInOut{KwInit}{Initialize}
\SetKw{KwAnd}{and}

\begin{algorithm}
\caption{Regret--minimizing Adaptive Incentive Design (RAID)}\label{alg:one}
  \KwParameters{%
    $\gamma \in [\frac{2}{3}, 1)$, $\sigma^2 > 0$, 
    $t\geq 1$.
  }
\KwOut{Estimates $\{ \hat\theta_i(\tau)\}_{\tau\leq t}$}
Define $A(\tau) = \tau^{\gamma}\log \tau$\;
Initialize $\Sigma_i(0) \succ 0$ and $\hat \theta_i(0)$, $i\in [n]$\;
\For{$i \in [n]$ \KwAnd $\tau = 1, \ldots, t$}{
    \Comment{Exploration phase}
    \uIf{$\tr(\Sigma_i(\tau-1)) > A(\tau-1)^{-1}$}{
        Issue $p_i(\tau) \sim \mathcal N(0,\sigma^2)$\;
        Observe $x_i(\tau) = x_i^*(p_i(\tau))$\;
        Update $\Sigma_i(\tau)$ and $\hat \theta_i(\tau)$, according to \eqref{eq:solution_estimator}\; 
    }
    \Comment{Exploitation phase}
    \uElseIf{$\tr(\Sigma_i(\tau-1)) \leq A(\tau-1)^{-1}$}{
        Issue
        $p_i(\tau) \gets  -\hat\theta_i(\tau-1)^\top \nabla \Phi(x_{i,\des})$\;
        Maintain $\Sigma_i(\tau) \gets \Sigma_i(\tau-1)$ and $\hat \theta_i(\tau) \gets \hat \theta_i(\tau-1)$
    }
}
\end{algorithm}

For each $i\in [n]$, the equation $p_{i,\des} = - \theta_i^{*\top} \nabla \Phi(x_{i,\des})$ defines the unique incentive that would elicit the desired response $x_{i,\des}$. Without prior knowledge of types, it is natural for the system planner to substitute  the estimate $\hat \theta_i(t)$ for 
$\theta_i^*$, which defines the adaptive policy \eqref{eq:self_tuning_regulator}. 
By Theorem~\ref{thm:sufficient_strong_consistency}, the algorithm must ensure that $\log t = o(\lambda_i(t))$ a.s. to achieve strongly-consistent estimation of $\theta_i^*$.
For this reason, the proposed algorithm uses a threshold-based switching rule to alternate between exploration and exploitation phases. 
Let
\begin{equation}
\label{eq:control}
    p_i(t)\! =\! \begin{cases}
        -\hat \theta_i(t\!-\!1)^\top \nabla\Phi(x_{i,\des}), \!\!\!\! & \tau_i(k) \!\leq\! t\! < \! \sigma_i(k)\\
        \epsilon_i(t), & \sigma_i(k) \! \leq \! t \! < \! \tau_i({k\!+\!1})
   \end{cases}
\end{equation}
where the probing sequence $\{\epsilon_i(t)\}_{t}$ is i.i.d with each $\epsilon_i(t) \sim \mathcal{N}(0, \sigma^2)$, and the switching schedule $\{\tau_i(k)\}_{k\geq 1}$ and $\{\sigma_i(k)\}_{k\geq 1}$ is designed as follows. Let $\tau_i(1) = 0$, 
\begin{equation}
\label{eq:switches}
\begin{aligned}
    \sigma_i(k) & = \inf\{t : t\geq \tau_i(k), \ \tr(\Sigma_i(t)) > A(t)^{-1}\}, \\
    \tau_i({k+1}) & = \inf\{t : t\geq \sigma_i(k), \ \tr(\Sigma_i(t)) \leq A(t)^{-1}\}, \\
\end{aligned}
\end{equation}
for $k\geq 1$, where $A(t)$ is given in Algorithm~\ref{alg:one}.
 
Due to the switching schedule \eqref{eq:switches}, the \emph{exploitation} phase takes place during iterations $t \in [\tau_i(k), \sigma_i(k))$ for all $k\geq 1$, and utilizes the current type estimate $\hat \theta_i(t)$ to regulate the agent's behavior. During exploitation, the planner does not update the estimate $\hat \theta_i(t)$. On the other hand, \emph{exploration} occurs when the information matrix $\Sigma_i(t)^{-1}$ needs to be excited. Notice that $1/\lambda_{i}(\Sigma_i(t)^{-1}) \leq \tr(\Sigma_i(t)) \leq d/ \lambda_{i}(\Sigma_i(t)^{-1})$, and therefore, when $\lambda_{i}(t) \leq A(t)$, it follows that $\tr(\Sigma_i(t)) \geq A(t)^{-1}$.  This condition defines the exploration iterations $t \in [\sigma_i(k), \tau_i({k+1}))$ for all $k\geq 1$, during which the planner utilizes \eqref{eq:solution_estimator} to update the estimate $\hat \theta_i(t)$ to better approximate $\theta_i^*$.

Applying Algorithm~\ref{alg:one} with the incentive policy \eqref{eq:control} and switching schedule \eqref{eq:switches}, the planner can guarantee the a.s. convergence rate of $\hat \theta_i(t)$ to $\theta_i^*$, for each $i\in [n]$, and that the $t$-stage average regret $t^{-1}R_t$ a.s. vanishes. This constitutes the main result of this work, and is presented below.

\begin{theorem}
\label{thm:algorithm_regret}
    Suppose Assumptions~\ref{ass:feasibility}, \ref{ass:types}, and \ref{ass:noise} hold. For every $i\in [n]$ and $\gamma \in [\tfrac{2}{3},1)$, the type estimates $\{\hat \theta_i(\tau)\}_{\tau \leq t}$ produced by Algorithm~\ref{alg:one} satisfy
    \[\| \hat \theta_i(t) - \theta_i^*\|_2^2 = O(t^{-\gamma}), \text{ a.s.}\]
    Moreover, the system planner's regret $ R_t$, as defined in \eqref{eq:regret_definition}, satisfies
    \[ R_t = O (t^\gamma \log t), 
    \text{ a.s.}\]
\end{theorem}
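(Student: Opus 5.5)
The argument is agent by agent: the regret is a sum over $i\in[n]$ of $\sum_\tau |x_i(\tau)-x_{i,\des}|^2$. Fix $i$, and let $E_i(t)$ and $X_i(t)$ be the numbers of exploration and exploitation iterations up to $t$ under the schedule \eqref{eq:switches}. The proof has three steps: (a) an information-matrix bound, (b) an estimation bound, and (c) a regret bound split into exploration and exploitation terms.

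\textbf{(a) Information-matrix bound.} Since $\tr(\Sigma_i)$ only changes during exploration, the switching rule keeps $\tr(\Sigma_i(t))$ close to $A(t)^{-1}$ from above and below. I will argue, up to boundary effects, that $\lambda_i(t)\geq A(t)/d$ for all large $t$.
\begin{itemize}
\item While $\tr(\Sigma_i(t-1))>A(t-1)^{-1}$ the algorithm explores, and each exploration step is an i.i.d.\ Gaussian incentive.
\item The exploration incentives form an i.i.d.\ subsequence indexed by the (stopping-time) exploration count. Theorem~\ref{thm:information_growth}, via Lemma~\ref{lemma:excitation} and a martingale strong law, therefore gives $\lambda_i(t)\geq c\,E_i(t)$ a.s.\ once $E_i(t)\to\infty$.
\item If exploration stopped forever, $\tr(\Sigma_i)$ would stay constant while $A(t)^{-1}\to 0$, which would eventually force exploration. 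Hence $E_i(t)\to\infty$.
\item Combining the two, exploration episodes end as soon as $\lambda_i\gtrsim A$. Since $A$ grows sublinearly, one added regressor per step can keep pace.
\end{itemize}
This yields $\tr(\Sigma_i(t))\leq C\,A(t)^{-1}$ for all large $t$, i.e.\ $\lambda_i(t)\geq A(t)/C$.

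It also yields the reverse bound $E_i(t)=O(A(t))$. Exploration occurs only when $\lambda_i(t-1)\lesssim d\,A(t-1)$, and $\lambda_i(t)\geq cE_i(t)$, so exploration stops once $cE_i(t)$ exceeds $d\,A(t)$.

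\textbf{(b) Estimation rate.} Insert $\lambda_i(t)=\Omega(t^\gamma\log t)$ into Theorem~\ref{thm:sufficient_strong_consistency}. This gives
\[
\|\hat\theta_i(t)-\theta_i^*\|_2^2=O\big(\log t/(t^\gamma\log t)\big)=O(t^{-\gamma}),
\]
which is the first claim.

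\textbf{(c) Regret.} During exploration, compactness of $\mathcal X_i$ bounds each term by $\mathrm{diam}(\mathcal X_i)^2$. The exploration contribution is therefore $O(E_i(t))=O(t^\gamma\log t)$.

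During exploitation, $p_i(\tau)-p_{i,\des}=-(\hat\theta_i(\tau-1)-\theta_i^*)^\top\nabla\Phi(x_{i,\des})$. By Assumption~\ref{ass:feasibility}, $x_{i,\des}$ is interior, so $p_{i,\des}\in\mathcal P_i$, and for large $\tau$ the estimate error is small enough that $p_i(\tau)\in\mathcal P_i$. Here I must handle the fact that $h$ is only guaranteed $1/m_i$-Lipschitz on $\mathcal P_i$. I would argue that the best response is $1/m_i$-Lipschitz in $p$ globally, by strong convexity of the projected problem. Then
\[
|x_i(\tau)-x_{i,\des}|^2\leq m_i^{-2}\|\nabla\Phi(x_{i,\des})\|^2\,\|\hat\theta_i(\tau-1)-\theta_i^*\|^2=O(\tau^{-\gamma}).
\]
Summing over $\tau\leq t$ gives $O(t^{1-\gamma})$. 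Since $\gamma\geq 2/3$, we have $1-\gamma\leq 1/2<\gamma$, so this is absorbed into $O(t^\gamma\log t)$. Summing over $i$ gives the regret claim.

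\textbf{Main obstacle.} The delicate part is step (a). Linear growth of $\lambda_i$ along the exploration subsequence must be established rigorously even though the exploration times are data-dependent. I would handle this with the martingale argument underlying Theorem~\ref{thm:information_growth}, applied to $\sum_\tau \delta_i(\tau)\mathbf 1\{\text{explore at }\tau\}(\xi_i\xi_i^\top-\mathbb E[\xi_i\xi_i^\top\delta_i])$. Its increments are bounded because $\mathcal X_i$ is compact, so it is $o(E_i(t))$ a.s.
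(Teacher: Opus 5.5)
Your proposal is correct and follows the paper's proof essentially step for step. You establish $\lambda_i(t)=\Omega(A(t))$ and $\#_i(t)=O(A(t))$ from the switching thresholds and the linear growth of the information matrix along the exploration subsequence (the paper's Lemma~\ref{lemma:number_of_exploration_samples}), insert this into Theorem~\ref{thm:sufficient_strong_consistency}, and split the regret into an $O(\#_i(t))$ exploration term and an $O(t^{1-\gamma})$ exploitation term. Your martingale strong law over the predictably selected exploration steps, and your strong-convexity argument that the best response is globally $1/m_i$-Lipschitz, make rigorous two steps the paper only asserts: the re-indexing by an auxiliary i.i.d.\ sequence, and the bound $|x_i-x_{i,\des}|\le m_i^{-1}|p_i-p_{i,\des}|$ when $p_i\notin\mathcal{P}_i$. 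One small fix: center the martingale increments as $\delta_i\xi_i\xi_i^\top-\mathbb{E}[\delta_i\xi_i\xi_i^\top]$, since as you wrote it $\delta_i$ also multiplies the mean and the increments no longer have zero conditional mean.
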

\begin{proof}
    Presented in Section~\ref{sec:proofs}.
\end{proof}
The above regret bound is sublinear for every choice of $\gamma \in [\frac{2}{3},1)$, hence the policy is asymptotically optimal. Among admissible choices, $\gamma=\frac{2}{3}$ yields the fastest decay of $t^{-1}R_t$ and thus represents the planner’s preferred parameter.

\section{Numerical Examples}
\label{sec:numerical_examples}
We illustrate the performance of the proposed algorithm through numerical simulations. 
We first consider a game of $n=3$ players
with cubic cost functions according to \eqref{eq:costs} and the feasible response set $\mathcal{X} = [-1, 1]^3$. Player preferences are represented by the unknown types $\theta_1^* = (1, 0.5, 0)^\top$, $\theta_2^* = (0, 3.5, 1)^\top$, and $\theta_3^* = (-1, 3.5, -1)^\top$, and each agent model~\eqref{eq:model} is perturbed by the process $\{e_i(t)\}_{t\geq 1}$ which is i.i.d. and normally-distributed with variance $0.1$.
The system planner has a desired response $x_\des = (0.5, 0.5, 0.5)^\top$, and utilizes Algorithm~\ref{alg:one} with parameters $\gamma = \tfrac{2}{3}$ and $\sigma^2 = 2$.

Theorem~\ref{thm:algorithm_regret} predicts that the parameter estimation error $\| \hat \theta_i(t)- \theta_i^*\|_2$ decays as  $O (t^{-\gamma/2})$ a.s. and the average regret $t^{-1}R_t$ decays as $O(t^{\gamma-1}\log t)$ a.s. Taking $100$ independent runs of Algorithm~\ref{alg:one}, Figure~\ref{fig:algorithm_estimation_error} depicts the planner's empirical expectation of the estimation errors and accumulated regret, respectively. The results obtained are consistent with the predicted almost-sure decay rates of the agents' type estimation error and the planner's average regret.
\begin{figure}
    \centering
    \includegraphics[width=\linewidth]{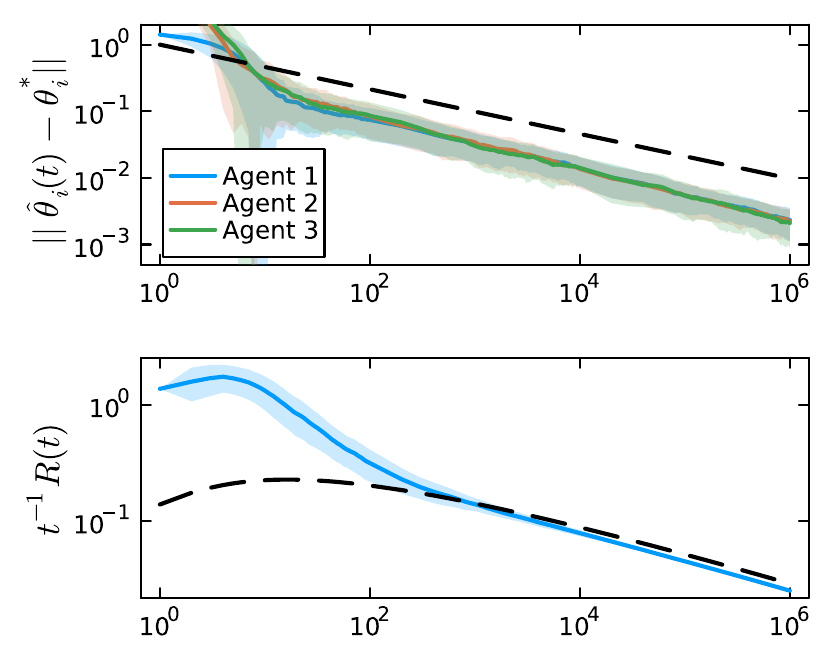}
    \caption{Mean (solid) and $\pm 1$ standard deviation (shaded) of $\|\hat \theta_i(t)- \theta_i^*\|_2$ and $t^{-1}R_t$  over 100 independent realizations of Algorithm~\ref{alg:one}. Dashed lines indicate the a.s. convergence rates predicted in Theorem~\ref{thm:algorithm_regret}. }
    \label{fig:algorithm_estimation_error}
\end{figure}

Figure~\ref{fig:different_noise} illustrates that the convergence of the parameter estimator is dependent on the injected probing excitation rather than the model noise $e_i(t)$. In particular, the
convergence rate in Theorem~\ref{thm:algorithm_regret} is guaranteed even when $e_i(t)$ is of measure-zero support, such as a Rademacher-distributed random variable on $\pm0.1$ (Fig. \ref{fig:different_noise}, second row). This confirms that the weak excitation condition of Theorem~\ref{thm:sufficient_strong_consistency} is satisfied by the probing policy of Algoritm~\ref{alg:one}.

\begin{figure}
    \centering
    \includegraphics[width=\linewidth]{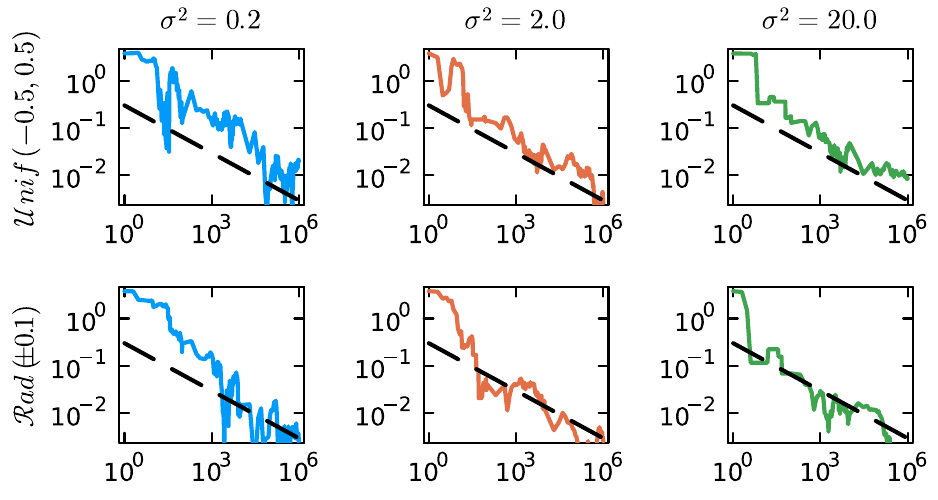}
    \caption{Estimation error $\|\hat{ \theta}_i(t)- \theta_i^*\|_2$ over a single realization of Algorithm~\ref{alg:one} for a single agent. Dashed lines indicate the rate $O(t^{-\gamma/2})$, $\gamma= 2/3$. Columns vary the probing parameter $\sigma^2$.
    (Top row) $e_i(t)\sim \mathcal{U}nif [-0.5, 0.5]$.
    (Bottom row) $e_i(t)$ is a Rademacher-distributed random variable on $\{\pm 0.1\}$.}
    \label{fig:different_noise}
\end{figure}

Figure~\ref{fig:paramters} validates the regret bound in Theorem~\ref{thm:algorithm_regret} across different choices of parameters in Algorithm~\ref{alg:one}. As predicted, $\gamma = \tfrac{2}{3}$
yields the fastest decay of $t^{-1}R_t$ among admissible values and is therefore the planner's preferred choice. Observe further that the probing variance $\sigma^2 $ affects the onset of the asymptotic regime in Algorithm~\ref{alg:one}. When $\sigma^2$ is small (e.g $\sigma^2 = 0.5$), the information matrix grows slowly, resulting in near-constant average regret over a given simulation horizon. This suggests that the probing variance and parameter $\gamma$ should be chosen relative to the problem horizon, and that a finite-time analysis of the algorithm is needed to characterize this tuning. 
This is left as an interesting direction for future work.

\begin{figure}
    \centering
    \includegraphics[width=\linewidth]{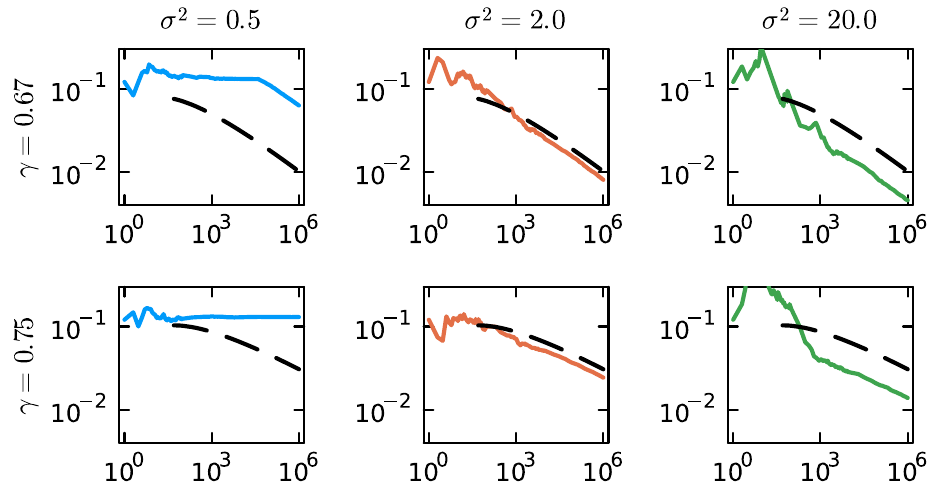}
    \caption{Average regret $t^{-1}R_t$ over a single realization of Algorithm~\ref{alg:one} for a single agent. Model noise is $e_i(t) \sim \mathcal{N}(0, 0.1)$. Dashed lines indicate the rate $O(t^{\gamma-1}\log t)$. Columns vary the probing parameter $\sigma^2$ and rows vary the switching parameter $\gamma$.}
    \label{fig:paramters}
\end{figure}

\section{Proofs}
\label{sec:proofs}
\subsection{Proof of Lemma~\ref{lemma:excitation}}

\begin{lemma}
\label{lemma:PD_monomial_moments}
Let $f: \mathbb R \to \mathbb R^d$ be the vector-valued mapping  $f(x)= (1, x, \ldots, x^{d-1})^\top$, and let
$x \sim \mathcal{N}(0, \sigma^2)$. For any nonempty, open interval $A \subset \mathbb R$,
\[\mathbb E [f(x)f(x)^\top \mid x \in A] \succ 0.\]
\end{lemma}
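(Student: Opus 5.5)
The plan is to reduce positive definiteness to a quadratic-form statement about polynomials. Then we use the fact that a nonzero polynomial has finitely many roots, while the conditional law of $x$ given $x\in A$ has a density that is positive on all of $A$.

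First, the conditional expectation is well defined. Since $A$ is a nonempty open interval and the Gaussian density $\phi_\sigma(x)=(2\pi\sigma^2)^{-1/2}e^{-x^2/(2\sigma^2)}$ is strictly positive everywhere, we have $\mathbb P(x\in A)=\int_A\phi_\sigma>0$. The conditional density is $\phi_\sigma(x)\mathbf 1\{x\in A\}/\mathbb P(x\in A)$. All entries of $f(x)f(x)^\top$ are monomials $x^{j+k}$ with $j+k\le 2d-2$, and Gaussian moments of every order are finite. Hence $M:=\mathbb E[f(x)f(x)^\top\mid x\in A]$ is a finite symmetric matrix, and $M\succeq 0$ because it is an average of positive semidefinite rank-one matrices.

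Second, we show $M$ is strictly positive definite. Fix any $v\in\mathbb R^d\setminus\{0\}$ and write $q_v(x)=v^\top f(x)=\sum_{k=0}^{d-1}v_{k+1}x^k$, a nonzero polynomial of degree at most $d-1$. Then
\[
v^\top M v=\frac{1}{\mathbb P(x\in A)}\int_A q_v(x)^2\,\phi_\sigma(x)\,dx .
\]
The integrand is nonnegative and continuous. Since $q_v\not\equiv 0$, it has at most $d-1$ real roots, so there is a point $x_0\in A$ with $q_v(x_0)\neq0$. Because $A$ is open, continuity gives a subinterval $(x_0-\eta,x_0+\eta)\subset A$ on which $q_v^2\phi_\sigma\ge c>0$. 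Therefore the integral is at least $2\eta c>0$, so $v^\top Mv>0$ for every $v\neq0$, i.e.\ $M\succ0$. Equivalently, the zero set of $q_v$ has Lebesgue measure zero while $A$ has positive measure, so $q_v^2>0$ almost everywhere on $A$.

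There is no real obstacle. The only points that need care are that $A$ must have positive Lebesgue measure, which is exactly why openness and nonemptiness are assumed, and that the moments are finite. One could note that the same argument works for any distribution with a density positive on $A$ and finite moments up to order $2d-2$. This generality is what is needed downstream, when the lemma is applied in the proof of Lemma~\ref{lemma:excitation} through the change of variables $x_i=h(p_i)$ of Lemma~\ref{lemma:NE_bijection_mapping} on the informative region $\mathcal P_i$.
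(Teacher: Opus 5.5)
Your proof is correct and follows the same route as the paper. Both arguments reduce $v^\top \mathbb E[f(x)f(x)^\top \mid x\in A]\,v=0$ to the vanishing of $\int_A q_v(x)^2\phi_\sigma(x)\,dx$, and then use that a nonzero polynomial of degree at most $d-1$ cannot vanish on a set of positive measure. Your version is somewhat more careful: it checks that $\mathbb P(x\in A)>0$ and that the moments are finite, and it gives an explicit positive lower bound on the integral instead of arguing by contradiction.
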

\begin{proof}
Let $A = (a,b)$ for $a<b$. Then
\[\begin{aligned}
    \mathbb E [f(x)f(x)^\top\!\!\mid\! x \in A] = \frac{\sigma^{-1}}{\Phi(b)\!-\!\Phi(a)} \! \int_a^b \!\!f(x)f(x)^\top \!\phi(\frac{x}{\sigma}) dx,
\end{aligned}\]
where $\Phi$ and $\phi$ represent the CDF and PDF of $\mathcal{N}(0,1)$, respectively. 
For any
$z \in \mathbb R^d$ satisfying
$z^\top\mathbb E [f(x)f(x)^\top \mid x \in A]z = 0$,
it holds that
\begin{align*}
    \int_a^b (f(x)^\top z)^2 \phi\Big(\frac{x}{\sigma}\Big) dx = 0.
\end{align*}
This further implies that the polynomial
\begin{align*}
    T_z(x) = \sum_{k=0}^{d-1} z_k x^k = 0 \text{ a.s. on the event }x\in A,
\end{align*}
where $z_k$ is the $k$-th element of $z$.
However, $T_z(x)$ is a polynomial of degree $d-1$, and is $0$ on a set of non-zero measure if and only if $z=0$.
\end{proof}

\begin{proof}[\textbf{Proof of Lemma~\ref{lemma:excitation}}]
Notice that, due to Lemma~\ref{lemma:NE_bijection_mapping}, 
\[
\begin{aligned}
& \mathbb E[\xi_i(t)\xi_i(t)^\top \delta_i(t)]\\  
= \, &\mathbb P (x_i(t) \in \interior \mathcal{X}_i )\mathbb E[\xi_i(t)\xi_i(t)^\top \mid x_i(t) \in \interior \mathcal{X}_i ] \\ 
= \,& 
\mathbb P (p_i(t) \in \mathcal{P}_i )\mathbb E[\xi_i(t)\xi_i(t)^\top \mid p_i(t) \in \mathcal{P}_i ],
\end{aligned}\]
so it is sufficient to prove $\mathbb E[\xi_i(t)\xi_i(t)^\top \mid p_i(t) \in \mathcal{P}_i ] \succ 0$.

First, for the vector-valued map
$f(x)= (1, x, \ldots, x^{d-1})^\top$, it holds that $\xi_i(t) = \nabla\Phi_i(x_i(t)) = D_1f(x_i(t))$, where $D_1 = \operatorname{diag}(1, 2, \ldots, d)$.
Second, on the event that $p_i(t) \in \mathcal{P}_i$, Lemma~\ref{lemma:NE_bijection_mapping} guarantees that $x_i(t) = h_i(p_i(t))$ for an $h_i\in \mathcal{C}^1$. By the mean value theorem, there exists $w_i \in (0, p_i(t))$ such that $x_i(t) = h(0) + h'_i(w_i)p_i(t)$. By the binomial theorem on $f(x_i(t))$, we have 
\[\begin{aligned}
    & f(x_i(t))f(x_i(t))^\top \\ = \, &\Pi\left(h_i(0)\right) f\left(h'_i(w_i)p_i(t)\right) f\left(h'_i(w_i)p_i(t)\right)^\top\Pi\left(h_i(0)\right)^\top  \\
     =\, &\Pi\left(h_i(0)\right) D_2 f\left(p_i(t)\right) f\left(p_i(t)\right)^\top D_2\Pi\left(h_i(0)\right)^\top ,
\end{aligned} \]
where $D_2 = \operatorname{diag}(1, h'_i(w_i), \ldots, h_i'(w_i)^{d-1})$, and $\Pi(x) \in \mathbb R^{d\times d}$ is a nonsingular lower-triangular matrix.
Taking the expectation and using Lemma~\ref{lemma:PD_monomial_moments}, the result follows.
\end{proof}

\subsection{Proof of Theorem~\ref{thm:information_growth}}

\begin{proof}
    By the compactness of $\mathcal{X}_i$, $\|\xi_i(t)\|_2$ is bounded and $\mathbb E \|\xi_i(t)\|_2 < \infty$.  
    Define the matrix  $M=\mathbb E[\xi_i(t) \xi_i(t)^\top \delta_i(t)]$, where $M \succ 0$ due to Lemma~\ref{lemma:excitation}. 
    By the strong law of large numbers
    \[\begin{aligned}
        t^{-1}\Sigma_i(t)^{-1} = t^{-1} \sum_{\tau = 1}^t \xi_i(t) \xi_i(t)^\top \delta_i(t)  \overset{a.s.}{\longrightarrow} M.
    \end{aligned} \]
    Applying Weyl's inequality,
    it holds that
    \[|t^{-1} \lambda_{\min}(\Sigma_i(t)^{-1}) - \lambda_{\min}(M) | \leq \|t^{-1}\Sigma_i(t)^{-1}- M\|_2,\]
    which in turn implies
    $t^{-1}\lambda_{\min}(\Sigma_i(t)^{-1}) \overset{a.s.}{\longrightarrow} \lambda_{min}(M)$, and hence, $\lambda_i(t) = \Theta(t)$ a.s. 
\end{proof}

\subsection{Proof of Theorem~\ref{thm:algorithm_regret}}
For each $i\in [n]$ and $t$, consider a schedule $\{\tau_i(k)\}_{k\geq 1}$, $\{\sigma_i(k)\}_{k\geq 1}$ according to \eqref{eq:switches}.
Let $ K_{i,t}= \sup\{k : \tau_i(k) \leq t\}$. By definition, it holds that $\tau_i(K_{i,t}) \leq t <  \tau_i(K_{i,t}+1)$, for any $t$.
Define $\#_i(t)$ to be the total number of exploration samples up to time $t$, 
that is,
\[\#_i(t) = \sum_{k=1}^{K_{i,t}-1} (\tau_i(k+1) - \sigma_i(k)) + \max\{0, t - \sigma_i(K_{i,t})\}.\]

\begin{lemma}
\label{lemma:number_of_exploration_samples}
Suppose Assumptions~\ref{ass:types} and \ref{ass:noise} hold. For every $i\in[n]$, Algorithm~\ref{alg:one} guarantees that $\lambda_{i}(t) = \Theta(A(t))$ a.s., and $\#_i(t) = O(A(t))$ a.s.
\end{lemma}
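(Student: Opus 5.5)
The argument rests on the sandwich $\lambda_i(t)^{-1}\le \tr(\Sigma_i(t))\le d\,\lambda_i(t)^{-1}$ together with the switching rule \eqref{eq:switches}. The information matrix $\Sigma_i(t)^{-1}$ only changes during exploration steps, through the rank-one update \eqref{eq:Sigma_i_rank_one}. I will relate $\lambda_i(t)$ to $A(t)$ via the threshold, and relate $\lambda_i(t)$ to $\#_i(t)$ via the linear growth of Theorem~\ref{thm:information_growth}, applied along the subsequence of exploration times.

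\emph{Step 1: upper bound $\lambda_i(t)=O(A(t))$.} Exploration happens at step $\tau$ only if $\tr(\Sigma_i(\tau-1))>A(\tau-1)^{-1}$. At such a step this gives $\lambda_i(\tau-1)\le d\,A(\tau-1)$. One exploration step raises $\lambda_{\min}$ by at most $\sup_{x\in\mathcal{X}_i}\|\nabla\Phi(x)\|_2^2=:c$, which is finite by compactness, so $\lambda_i(\tau)\le dA(\tau-1)+c$. During exploitation $\lambda_i$ stays constant while $A$ is nondecreasing for large $t$. Hence, with $\tau$ the last exploration time up to $t$, we get $\lambda_i(t)\le dA(t)+c=O(A(t))$ deterministically.

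\emph{Step 2: lower bound $\lambda_i(t)=\Omega(A(t))$.} If step $t+1$ is exploitation, then $\tr(\Sigma_i(t))\le A(t)^{-1}$, so $\lambda_i(t)\ge A(t)$. If instead $t$ lies inside an exploration run $[\sigma_i(k),\tau_i(k+1))$, I need to show the run terminates, so that $\tau_i(k+1)<\infty$ a.s. and the lower bound is reached again. This follows from Step 3 below: along exploration steps $\lambda_i$ grows linearly, while $A$ grows only like $t^\gamma\log t$ with $\gamma<1$. Inside a run, $\lambda_i$ is at least its value at the run start, namely $\lambda_i(\sigma_i(k)-1)\ge A(\sigma_i(k)-1)$ because the previous step was exploitation. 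So I must show $A(t)/A(\sigma_i(k)-1)$ stays bounded within a run. Runs have length $O(A)$ by Step 3, which is $o(t)$, so $A(t)/A(\sigma_i(k))\to 1$. Combining the cases gives $\lambda_i(t)\ge c'A(t)$ eventually.

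\emph{Step 3: $\lambda_i(t)\ge c''\#_i(t)$ a.s. and the count bound.} Let $s_1<s_2<\dots$ be the exploration times. The probes $p_i(s_j)$ are fresh $\mathcal N(0,\sigma^2)$ variables independent of the past, so $\{\xi_i(s_j)\xi_i(s_j)^\top\delta_i(s_j)\}_j$ is an i.i.d. sequence. Each $s_j$ is a stopping time, so this follows by the strong Markov/optional-sampling property. Applying the SLLN argument of Theorem~\ref{thm:information_growth} together with Lemma~\ref{lemma:excitation}, I get $\lambda_{\min}\bigl(\sum_{j\le N}\xi\xi^\top\delta\bigr)/N\to\lambda_{\min}(M)>0$ a.s. provided $N\to\infty$; if $N$ stays finite, the claim is trivial for large $t$. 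Hence $\lambda_i(t)\ge \tfrac12\lambda_{\min}(M)\,\#_i(t)$ for large $t$. Combined with Step 1, this yields $\#_i(t)=O(A(t))$. It also shows $\#_i(t)\to\infty$, since exploration is triggered whenever $\lambda_i$ falls behind $A$.

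\emph{Main obstacle.} The main difficulty is Step 3's i.i.d. claim, together with the circularity between Steps 2 and 3. The run length is itself controlled by linear growth along exploration samples, and that growth is an asymptotic statement. I would handle this by first establishing the a.s. SLLN along the exploration subsequence, which makes no reference to $A$. I would then deduce both conclusions for $t$ beyond a random time $T_0$, after which the empirical minimum eigenvalue exceeds $\tfrac12\lambda_{\min}(M)$ times the sample count.
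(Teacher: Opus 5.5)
Your proof is correct. Step 1 (the deterministic upper bound) and the subsequence SLLN in Step 3 match the paper's argument. Your optional-skipping justification at the predictable exploration times carries the same content as the paper's auxiliary sequence $\mu(k)$ with $\epsilon(\tau)=\mu(\#(\tau))$.

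Where you genuinely differ is the lower bound inside an exploration run.

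\emph{The paper's route.} It uses the two-sided sandwich $(c-\epsilon)\#(t)\le\lambda(t)\le(c+\epsilon)\#(t)$ to write $\lambda(t)\ge \frac{c^-}{c^+}\bigl(A(\sigma(K_t)-1)+c^+(t-\sigma(K_t)+1)\bigr)$. It then argues that the linear within-run growth outpaces the increment $A(t)-A(\sigma(K_t)-1)$ because $A$ is sublinear, which implicitly needs $A'(t)\to 0$. The count bound $\#(t)=O(A(t))$ is read off only at the end.

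\emph{Your route.} You reverse the order. From the one-sided bound $\lambda(t)\gtrsim\#(t)$ and the deterministic upper bound, you first get $\#(t)=O(A(t))=o(t)$. So the current run started at some $\sigma\ge t-o(t)$. Monotonicity of $\lambda$ together with $A(t)/A(\sigma-1)\to 1$ then gives $\lambda(t)\ge\lambda(\sigma-1)\ge A(\sigma-1)\ge \tfrac12 A(t)$ for large $t$. This needs only the lower half of the sandwich plus the regular variation of $A$. It also avoids the paper's somewhat loose ``grows strictly faster'' step. The paper's route, in turn, shows directly that $\lambda$ keeps pace with $A$ during a run, without first bounding the total exploration count.

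Two small remarks:
\begin{itemize}
\item You do not need termination of the current run for the bound at time $t$. Termination follows anyway from $\#(t)=o(t)$.
\item $\#_i(t)\to\infty$ comes from the switching rule, not from Step 3: a frozen $\lambda_i$ eventually falls below $A$ and triggers exploration. The paper makes the same observation at the start of its proof.
\end{itemize}
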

\begin{proof}
    
    We will denote $\lambda_i(t) = \lambda_{\min}(\Sigma_i(t)^{-1})$ and $\operatorname{tr}_i(t) = \operatorname{tr}(\Sigma_i(t))$. The proof holds for each $i\in[n]$, so we omit the subscript $i$ to ease notation.
    
    Notice that $\lim_{t\to\infty} \#(t) = \infty$ a.s., otherwise, there would exist some $t_0$ such that $\operatorname{tr}(t_0) \leq A(t)^{-1}$ for all $t\geq t_0$, which contradicts the monotonicity of $A(t)$.

    Denote by $\mathcal{I}(t)$ the times up to $t$ that belong to exploration phases, hence $|\mathcal{I}(t)| = \#(t)$. 
    The subsequence of incentives satisfies that $\{p(\tau)\}_{\tau \in \mathcal{I}(t)} = \{\epsilon(\tau)\}_{\tau \in \mathcal{I}(t)}$, where $\{\epsilon(\tau)\}_{\tau \in \mathcal{I}(t)}$ is an i.i.d. sequence of Gaussian noise. 
    To see this, we can consider an auxiliary noise sequence $\{\mu(k)\}_{k\geq 1}$, where $\mu(k)$ has the same distribution as $\epsilon(t)$, i.e.,  $\mu(k) \sim \mathcal{N}(0,\sigma^2)$. When $\tau \in \mathcal I(t)$, sampling the probing noise $\epsilon(\tau)$ is equivalent to sampling from the process $\{\mu(k)\}$ with $\epsilon({\tau})=\mu(\#(\tau))$. Therefore, 
    $\{\epsilon(\tau)\}_{\tau \in \mathcal{I}(t)}= \{\mu(k)\}_{k=1}^{\#(t)}$ is an i.i.d. probing sequence.
    
    By Theorem~\ref{thm:information_growth}, there exists some constant $c>0$ such that 
    \[\lim_{t\rightarrow \infty} \frac{1}{\#(t)}\lambda_{\min} \Big(\sum_{\tau \in \mathcal{I}(t)}\xi(\tau)\xi(\tau)^\top \delta(\tau) \Big) = c,  \text{ a.s.}\]
    $\Sigma_i(\tau)^{-1}$ is only updated in Algorithm~\ref{alg:one} when $\tau \in \mathcal{I}(t)$, so
   $ \sum_{\tau \in \mathcal{I}(t)} \xi(\tau)\xi(\tau)^\top \delta(\tau) = \Sigma_i(t)^{-1}$.
    Therefore, for any $\epsilon > 0$, there exists some $t_\epsilon>0$ such that
    \begin{equation}
    \label{eq:thm3_proof_1}
           (c- \epsilon) \#(t) \leq \lambda(t)  \leq (c+\epsilon) \#(t), \quad \forall t\geq t_\epsilon.
    \end{equation}
    Denote $c^- = c-\epsilon$, and $c^+ = c+\epsilon$.
    Moreover, denote $B = \max_{x\in \mathcal{X}} \|\nabla \Phi(x)\|^2$, such that for all $t$ 
    \begin{equation}
    \label{eq:thm3_proof_2}
    \begin{aligned}
        \lambda(t+1) & \leq \lambda(t) + \lambda_{max}(\xi(t) \xi(t)^\top)  \leq  \lambda(t) +B.
    \end{aligned}
    \end{equation}
    Finally, by~\eqref{eq:switches}, there exists some $t_0>0$ such that
    \begin{equation}
    \label{eq:thm3_proof_3}
    \begin{aligned}
        \lambda(t) \geq  A(t), \quad &\forall t\geq t_0 : \    \tau(K_t) \leq t < \sigma(K_t), \\ 
        \lambda(t) \leq d A(t), \quad &\forall t\geq t_0: \  \sigma(K_t) \leq t<\tau(K_t+1), 
    \end{aligned}
    \end{equation}
    where $d$ is the dimension of the codomain of $\Phi$ in \eqref{eq:parametrization}. Observe that \eqref{eq:thm3_proof_2}-\eqref{eq:thm3_proof_3} ensure $\lambda(t) = O(A( t))$. Specifically,
    \begin{enumerate}
        \item when $\sigma({K_t}) \leq t <  \tau(K_t+1)$, 
        it holds \(\lambda(t) \leq  dA(t)\);
        \item when $t = \tau(K_t)$, by \eqref{eq:thm3_proof_2} and \eqref{eq:thm3_proof_3}, 
        \[\lambda(t) \leq \lambda(t-1) + B  \leq dA(t-1) + B; \]
        \item when $\tau(K_t) < t < \sigma(K_t)$, by \eqref{eq:thm3_proof_2}, 
        \[\begin{aligned}
            \lambda(t) = \lambda(\tau(K_t)) & \leq dA(\tau(K_t)-1) + B  \\&\leq dA(t-1) + B.
        \end{aligned}\]
    \end{enumerate}
    Moreover, \eqref{eq:thm3_proof_1}-\eqref{eq:thm3_proof_3} ensure $\lambda(t) = \Omega(A( t))$ a.s., because
    \begin{enumerate}
        \item when $\tau(K_t) \leq t < \sigma(K_t)$, \(\lambda(t) \geq  A(t)\) due to  \eqref{eq:thm3_proof_3}; 
        \item when $\sigma(K_t) \leq t < \tau(K_t+1) $, by \eqref{eq:thm3_proof_1} we have, with probability one,
        \[\begin{aligned}
            \lambda(t) & \geq c^- \left( \#(\sigma(K_t)-1) + t-\sigma(K_t)+1 \right) \\
            & \geq \frac{c^-}{c^+}\left( \lambda(\sigma(K_t)-1) + c^+(t-\sigma(K_t)+1) \right) \\
            & \geq \frac{c^-}{c^+}\left( A(\sigma(K_t)-1) + c^+(t-\sigma(K_t)+1) \right).
        \end{aligned} \]
        Algorithm~\ref{alg:one} selects $A(t)= o(t)$, so the second term grows strictly faster than $A(t)$. Therefore, it holds that $\lambda(t)\geq \min\{\frac{c^-}{c^+}, c^+\} \cdot A(t) $, and $\lambda(t) = \Omega(A(t))$ a.s.
    \end{enumerate}
     
    We conclude that $\lambda(t) = \Theta(A(t))$ a.s., and the upper bound on the growth of $\#(t)$ follows from \eqref{eq:thm3_proof_1}.
\end{proof}

\begin{proof}[\textbf{Proof of Theorem~\ref{thm:algorithm_regret}}]
    By Lemma~\ref{lemma:number_of_exploration_samples}, $\lambda_i(t) = \Theta(A(t))$
    a.s. for each $i\in [n]$. Then, by
    Theorem~\ref{thm:sufficient_strong_consistency},
    \[\|\hat \theta_i(t) - \theta_i^*\|_2^2 = O\Big(\frac{\log t}{\lambda_i(t)}\Big)= O\Big(\frac{\log t}{A(t)}\Big) = O(t^{-\gamma}), \text{ a.s.}\]

    The system planner's regret is
    \({R}_t = \sum_{i\in [n]} \sum_{\ell=1}^t |x_i(\ell)- x_{i,\des}|^2.\)
    For any $i \in [n]$, observe that
    \begin{align}
        \sum_{\ell=1}^t |x_i(\ell)- x_{i,\des}|^2\leq 
          &  \sum_{k=1}^{K_{i,t}} \Big(\sum_{\ell=\tau_i(k)}^{\sigma_i(k)-1} |x_i(\ell)- x_{i,\des}|^2 \tag{$i$}
        \\& + \sum_{\ell=\sigma_i(k)}^{\tau_i(k+1)-1}  |x_i(\ell)- x_{i,\des}|^2  \Big) . \tag{$ii$}
    \end{align}
    We consider each of the RHS terms above separately. 
    First, term $(ii)$ satisfies
    \[\begin{aligned}
        (ii)  \leq C\sum_{k=1}^{K_{i,t}} \sum_{\ell=\sigma_i(k)}^{\tau_i(k+1)-1}\!\! 1= O(\#_i(t)) = O(t^\gamma \log t) \text{ a.s.},
    \end{aligned}\]
    where $C= \max_{x\in \mathcal{X}_i}|x -  x_{i,\des}|^2$, and the last step follows from Lemma~\ref{lemma:number_of_exploration_samples}.
    Second, term $(i)$ satisfies
    \[\begin{aligned}
    \sum_{k=1}^{K_{i,t}}\sum_{\ell=\tau_i(k)}^{\sigma_i(k)-1} \!\! |x_i(\ell)- x_{i,\des}|^2 \leq  \frac{1}{m_i^2} \sum_{k=1}^{K_{i,t}}\sum_{\ell=\tau_i(k)}^{\sigma_i(k)-1} \!\! |p_i(\ell)- p_{i,\des}|^2 \\ \leq \frac{\|\nabla\Phi(x_{i,\des})\|_2^2}{m_i^2} \sum_{k=1}^{K_{i,t}}\sum_{\ell=\tau_i(k)}^{\sigma_i(k)-1}\!\! \|\hat \theta_i(\ell)- \theta_{i}^*\|_2^2.
    \end{aligned} \]
    We have established that $\|\hat \theta_i(\ell)- \theta_i^*\|_2^2 = O(\ell^{-\gamma})$ a.s., hence $\sum_{\ell=1}^t\|\hat \theta_i(\ell)- \theta_i^*\|_2^2 = O(t^{1-\gamma})$ a.s.
    For all $\gamma \in [\tfrac{2}{3}, 1)$, observe that term $(ii)$ increases faster than term $(i)$, and it follows that ${R}_t = O(t^\gamma \log t)$ a.s.
\end{proof}

\section{Concluding Remarks}
\label{sec:conclusion}

This work introduces the Regret-Minimizing Adaptive Incentive Design (RAID) problem, a unified framework for type identification and control in adaptive incentive design. 
Our first contribution is the design of a switching incentive policy that alternates between probing (exploration) and estimate-based (exploitation) incentives. We prove that this policy asymptotically minimizes the planner’s average 
$t$-stage regret almost surely.
Our second contribution is to establish the strong consistency of the type
estimator under a weak excitation condition for least-squares estimation,
thereby relaxing the standard persistence-of-excitation assumptions used in adaptive incentive design.

Throughout this paper, we have also identified two directions for future extensions of the RAID framework. First, and most significant, is to relax the decoupled-costs assumption in the nominal cost structure of \eqref{eq:costs}, and generalize RAID to settings with multi-agent interactions among participants.
Second, we aim to pursue a formal treatment of the Error-in-Variables problem that arises in regression \eqref{eq:model} when the observation noise is not independent of the agents' responses. 

\bibliographystyle{IEEEtran} 
\bibliography{IEEEabrv, bibliography}
\end{document}